\newtheorem{theorem}{Theorem}[section]
\newtheorem{lemma}[theorem]{Lemma}
\newtheorem{corollary}[theorem]{Corollary}
\newtheorem{remark}[theorem]{Remark}
\renewcommand\proofname{\bf{Proof}}
\title{\bf A note on the Bj\"{o}rner--Kalai theorem}
\author{Xiongfeng Zhan,  Xueyi Huang\thanks{Corresponding author.} \setcounter{footnote}{-1}\footnote{\emph{Email address:} zhanxfmath@163.com (X. Zhan), huangxy@ecust.edu.cn (X. Huang).}\\[2mm]
	\small School of Mathematics, East China University of Science and Technology, \\
	\small  Shanghai 200237, P. R. China
}
\date{}
\begin{document}
	\maketitle
	\begin{abstract}
 
 In 1988, Bj\"{o}rner and Kalai used combinatorial shadow functions to characterize the maximal Betti sequence for a given $f$-vector and the minimal $f$-vector for a given Betti sequence. Their description of the maximal Betti sequence was expressed through a set of inequalities. In this paper, we introduce an error function $\delta_k$ associated with the combinatorial shadow functions and use it to sharpen these inequalities into exact equalities. As a corollary, we obtain an equivalent form of Bj\"{o}rner and Kalai's characterization of all possible pairs $(f,\beta)$ that can occur as the $f$-vector and Betti sequence of a simplicial complex. Moreover, combining our results with a previous result of Bj\"{o}rner in 2011, we derive a new number-theoretic inequality concerning the count of odd square-free integers with a specified number of prime factors.

		\par\vspace{2mm}
		
		\noindent{\bfseries Keywords:} combinatorial shadow function, simplicial complex, $f$-vector, Betti number
		\par\vspace{1mm}
		
		\noindent{\bfseries 2010 MSC:} 05E45
	\end{abstract}

	\section{Introduction}\label{section::1}
	
	Let $k\geq 1$ be a fixed integer. According to number theory, for any integer  $n\geq 1$, there exists a unique sequence of integers $a_k>a_{k-1}>\cdots>a_i\geq i\geq 1$ such that
	\begin{equation}\label{equ::num}
		n=\binom{a_{k}}{k}+\binom{a_{k-1}}{k-1}+\cdots+\binom{a_i}{i}.
	\end{equation}
	This unique expansion allows one to define the following \textit{combinatorial shadow functions}: 
	\begin{align*}
		\partial_{k-1}(n)&=\binom{a_{k}}{k-1}+\binom{a_{k-1}}{k-2}+\cdots+\binom{a_i}{i-1},\\
		\partial^{k-1}(n)&=\binom{a_{k}-1}{k}+\binom{a_{k-1}-1}{k-1}+\cdots+\binom{a_i-1}{i}.
	\end{align*}
	For completeness, let $\partial_{k-1}(0)=\partial^{k-1}(0)=0$.
	
	Let $\mathbb{N}_0^{(\infty)}$ denote the set of ultimately vanishing sequences of nonnegative integers. The set $\mathbb{N}_0^{(\infty)}$ and all its subsets are naturally equipped with the \emph{componentwise partial order}, defined by
	\[
	(n_0, n_1, \ldots) \leq (n_0', n_1', \ldots) \quad \Longleftrightarrow \quad n_i \leq n_i' \text{ for all } i \geq 0.
	\]
	
	For an ordered pair $(f = (f_0, f_1, \ldots), \beta = (\beta_0, \beta_1, \beta_2, \ldots))$ of sequences from $\mathbb{N}_0^{(\infty)}$, we define  
	\[
	\chi_{k-1} := \chi_{k-1}(f, \beta) = \sum_{j \geq k} (-1)^{j-k} (f_j - \beta_j), \quad \text{for each } k \geq 0.
	\]
	Clearly, $f_k - \chi_{k-1} = \chi_{k} + \beta_k$.
	
	Let $\Delta$ be a finite simplicial complex. The \textit{$f$-vector} of $\Delta$ is the sequence $f = (f_0, f_1, \ldots)$, where 
	\[ f_i = \#\{F \in \Delta \mid \dim F = i\}, \quad \text{for each } i \geq 0. \]
	
	Let $\boldsymbol{k}$ be a fixed field, and let $\widetilde{H}_i(\Delta; \boldsymbol{k})$ denote the $i$-th reduced simplicial homology group of $\Delta$ over $\boldsymbol{k}$. The \textit{reduced Betti sequence} of $\Delta$ over $\boldsymbol{k}$ is defined as $\beta = (\beta_0, \beta_1, \ldots)$, where 
	\[ \beta_i = \dim_{\boldsymbol{k}} \widetilde{H}_i(\Delta; \boldsymbol{k}), \quad \text{for each } i \geq 0. \]
	
	Using the combinatorial shadow function $\partial_k$, Bj\"{o}rner and Kalai \cite{BK88} provided a complete characterization of all possible pairs $(f, \beta)$ that may occur as the $f$-vector and Betti sequence of a finite simplicial complex, extending the well-known Euler--Poincar\'{e} theorem (cf. \cite{Poi93,Poi99}):
\[
\chi_{-1} = \sum_{j \geq 0} (-1)^{j} (f_j - \beta_j) = 1.
\]

		\begin{theorem}[{Bj\"{o}rner and Kalai, \cite[Theorem 1.1]{BK88}}]\label{thm::1}
		Suppose that $f=(f_0,f_1,\ldots),\beta=(\beta_0,\beta_1,\ldots)\in \mathbb{N}_0^{(\infty)}$ are two given  sequences and $\boldsymbol{k}$ is a field. Then the following conditions are equivalent:
		\begin{enumerate}[$(a)$]\setlength{\itemsep}{0pt}
			\item $f$ is the $f$-vector and $\beta$ the Betti sequence over $\boldsymbol{k}$ of some simplicial complex,
			\item  $\chi_{-1}=1$, and $\partial_k(\chi_{k}+\beta_k)\leq \chi_{k-1}$ for all $k\geq 1$.
		\end{enumerate}
	\end{theorem}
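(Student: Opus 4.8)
The plan is to translate the two sequences into invariants of the reduced simplicial chain complex and to reduce both implications of the equivalence to Kruskal--Katona-type statements, ideally via algebraic shifting.

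\emph{Reformulation.} Given a complex $\Delta$ with $f$-vector $f$, work with the reduced chain complex $\widetilde{C}_\bullet(\Delta;\boldsymbol k)$, so that $\dim_{\boldsymbol k}\widetilde{C}_k=f_k$; write $r_k=\operatorname{rank}\widetilde{\partial}_k$ for the rank of the $k$-th boundary map and $\widetilde{Z}_k=\ker\widetilde{\partial}_k$. From $f_k=\dim\widetilde{Z}_k+r_k$ and $\dim\widetilde{Z}_k=\beta_k+r_{k+1}$ one gets $f_k-\beta_k=r_k+r_{k+1}$, and the alternating sum defining $\chi_{k-1}$ then telescopes, giving $\chi_{k-1}(f,\beta)=r_k$ for every $k\ge 0$. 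In particular $\chi_{-1}=r_0=1$ is the Euler--Poincar\'{e} relation, and, via the identity $f_k-\chi_{k-1}=\chi_k+\beta_k$ already recorded in the paper, $\chi_k+\beta_k=f_k-r_k=\dim\widetilde{Z}_k$. Hence condition $(b)$ is equivalent to ``$\chi_{-1}=1$ together with the cycle--shadow inequality''
\begin{equation*}
\partial_k\bigl(\dim_{\boldsymbol k}\widetilde{Z}_k(\Delta)\bigr)\ \le\ f_k(\Delta)-\dim_{\boldsymbol k}\widetilde{Z}_k(\Delta)\qquad(k\ge 1),\tag{$\star$}
\end{equation*}
so $(a)\Rightarrow(b)$ is the assertion that $(\star)$ holds for every $\Delta$, while $(b)\Rightarrow(a)$ is a realization problem.

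\emph{Necessity.} To prove $(\star)$ I would first pass to an algebraically shifted complex of $\Delta$ over $\boldsymbol k$, call it $\Gamma$: this preserves the $f$-vector and the reduced Betti numbers over $\boldsymbol k$ --- hence the quantities $\dim\widetilde{Z}_k=\beta_k+\chi_k$ occurring in $(\star)$ --- and $\Gamma$ is shifted, say on $[n]$. For a shifted complex $\Gamma$, the vertex splitting $\Gamma=\operatorname{star}_\Gamma(1)\cup\operatorname{del}_\Gamma(1)$ satisfies $\operatorname{star}_\Gamma(1)\cap\operatorname{del}_\Gamma(1)=\operatorname{lk}_\Gamma(1)$ with all three pieces shifted and, crucially, $\operatorname{lk}_\Gamma(1)\subseteq\operatorname{del}_\Gamma(1)$; since stars are contractible, a Mayer--Vietoris argument controls $\widetilde{Z}_k(\Gamma)$ in terms of the cycle spaces of $\operatorname{del}_\Gamma(1)$ and $\operatorname{lk}_\Gamma(1)$, while $f_k(\Gamma)=f_k(\operatorname{del}_\Gamma(1))+f_{k-1}(\operatorname{lk}_\Gamma(1))$. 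One then argues by induction on $n$, combining $(\star)$ for $\operatorname{del}_\Gamma(1)$ and $\operatorname{lk}_\Gamma(1)$ with monotonicity of the shadow functions and the Kruskal--Katona inequality applied to the containment $\operatorname{lk}_\Gamma(1)\subseteq\operatorname{del}_\Gamma(1)$. I expect this to be the main obstacle: $\widetilde{Z}_k$ is a linear subspace rather than a subcomplex, so naive face counting does not yield $(\star)$, and one genuinely needs either algebraic shifting (together with the nontrivial invariance of Betti numbers) or an equivalent exterior-algebra Kruskal--Katona statement.

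\emph{Sufficiency.} Assume $(b)$. A downward induction on $k$ (using that $\partial_{k+1}$ maps non-negative integers to non-negative integers, together with $\chi_{-1}=1$) shows the hypotheses force $\chi_k\ge 0$ for all $k\ge-1$, so the target boundary ranks $r_k:=\chi_{k-1}$ are non-negative, $r_0=1$, and $r_k+r_{k+1}=f_k-\beta_k\le f_k$. I would then construct a shifted complex $\Delta$ with $\dim\widetilde{C}_k=f_k$ and $\operatorname{rank}\widetilde{\partial}_k=r_k$ for every $k$, built one dimension at a time: having chosen the $(k-1)$-faces so that $\widetilde{\partial}_{k-1}$ has rank $r_{k-1}$ and hence cycle space of dimension $f_{k-1}-r_{k-1}=r_k+\beta_{k-1}$, one adjoins $f_k$ faces of dimension $k$ as a suitable reverse-lexicographic family, arranging that $r_k$ of them surject onto a prescribed $r_k$-dimensional subspace of the $(k-1)$-cycle space and the remaining $m_k=f_k-r_k$ are cycles. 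The inequality $\partial_k(m_k)=\partial_k(\chi_k+\beta_k)\le\chi_{k-1}=r_k$ is precisely the compatibility condition that makes this family downward closed --- its $(k-1)$-shadow fits among the $(k-1)$-faces already present --- and that forces the boundary rank to be exactly $r_k$. Running the reformulation backwards then yields $f(\Delta)=f$ and $\beta(\Delta;\boldsymbol k)=\beta$; the delicate point is to verify that no spurious homology is created, i.e., that the realized boundary ranks are equal to, and not merely bounded by, the prescribed $r_k$.
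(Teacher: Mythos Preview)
The paper does not prove this theorem. Theorem~\ref{thm::1} is quoted verbatim from Bj\"{o}rner and Kalai's 1988 paper (cited as \cite[Theorem~1.1]{BK88}) and is used as background; the present paper's contributions are Theorems~\ref{thm::main1} and~\ref{thm::main2} together with Corollary~\ref{cor::1}, all of which take Theorem~\ref{thm::1} as an input. So there is no ``paper's own proof'' to compare your proposal against.

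That said, your sketch is broadly faithful to the original Bj\"{o}rner--Kalai argument: the reformulation $\chi_{k-1}=r_k$, $\chi_k+\beta_k=\dim\widetilde{Z}_k$ is exactly how they set things up, and both the passage to a shifted complex via algebraic shifting for $(a)\Rightarrow(b)$ and the direct construction of a shifted complex for $(b)\Rightarrow(a)$ are their strategy. One caution on your necessity step: in the original proof the inequality $(\star)$ for a shifted complex is not obtained by Mayer--Vietoris on $\operatorname{star}/\operatorname{del}/\operatorname{lk}$ as you suggest, but rather from the explicit combinatorial description of cycles in a shifted complex (the $k$-cycles are supported on the faces not containing vertex~$1$, and their boundaries are controlled by the compressed structure), which gives $(\star)$ directly via Kruskal--Katona. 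Your inductive decomposition might be made to work, but it is not the cleanest route and you have correctly flagged it as the main obstacle.
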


	An ordered pair $(f, \beta)$ of sequences from $\mathbb{N}_0^{(\infty)}$ is called \textit{compatible}, denoted by $f \sim \beta$, if there exists a simplicial complex with $f$-vector $f$ and Betti sequence $\beta$. By Theorem \ref{thm::1}, this relation is purely combinatorial and independent of the field characteristic.

	In their seminal work \cite{BK88}, Bj\"{o}rner and Kalai further established the following foundational result concerning compatible pairs  $(f, \beta)$.

	\begin{theorem}[{Bj\"{o}rner and Kalai, \cite[Theorem 1.2, Theorem 5.3]{BK88}}]\label{thm::2}
		\begin{enumerate}[$(a)$]\setlength{\itemsep}{0pt}
			\item Suppose that $f$ is the $f$-vector of some simplicial complex. Then the set $B_{f}=\{\beta\in \mathbb{N}_0^{(\infty)}: f\sim\beta\}$ has a unique maximal element. Define $\psi(f)=\max B_f$.
			\item Suppose that  $\beta\in \mathbb{N}_0^{(\infty)}$. The set $F_{\beta}=\{f\in \mathbb{N}_0^{(\infty)}: f\sim\beta\}$ has a unique minimal element. Define $\phi(\beta)=\min F_\beta$.
			\item Suppose that $f$ is the $f$-vector of some simplicial complex, and $\beta\in \mathbb{N}_0^{(\infty)}$.  Then $\beta=\psi(f)$ if and only if $\chi_{-1}=1$, and $\partial_k(\chi_{k}+\beta_k)\leq\chi_{k-1}\leq\partial_k(\chi_{k}+\beta_k+1)$ for all $k\geq 1$.
			\item Suppose that $(f,\beta)$ is a compatible pair. Then $f=\phi(\beta)$ if and only if  $\partial_k(\chi_{k}+\beta_k)=\chi_{k-1}$ for all $k\geq 1$.
			\item $\psi(\phi(\beta))=\beta$ for all $\beta\in\mathbb{N}_0^{(\infty)}$.
			\item $\phi(\psi(f))\leq f$ for all $f$-vectors $f$. 
		\end{enumerate}
	\end{theorem}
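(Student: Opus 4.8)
The plan is to deduce statement $(f)$ directly from the definitions of $\psi$ and $\phi$ recorded in parts $(a)$ and $(b)$ of Theorem~\ref{thm::2}. The point is simply that $\psi(f)$ is itself a Betti sequence compatible with $f$, so that $f$ belongs to the very set of which $\phi(\psi(f))$ is declared to be the minimum.

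Concretely, I would first note that since $f$ is the $f$-vector of a simplicial complex, part $(a)$ guarantees that $\psi(f)=\max B_f$ is well-defined and, in particular, that $\psi(f)\in B_f$; that is, $f\sim\psi(f)$. Any simplicial complex realizing the pair $(f,\psi(f))$ has finitely many faces, so its reduced Betti sequence is ultimately vanishing, whence $\psi(f)\in\mathbb{N}_0^{(\infty)}$ and part $(b)$ applies with $\beta=\psi(f)$; thus $\phi(\psi(f))=\min F_{\psi(f)}$ is well-defined. Next, the relation $f\sim\psi(f)$ says precisely that $f\in F_{\psi(f)}=\{g\in\mathbb{N}_0^{(\infty)}:g\sim\psi(f)\}$, so it only remains to know that the unique minimal element $\phi(\psi(f))$ of $F_{\psi(f)}$ is actually its least element. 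Since the componentwise order on $\mathbb{N}_0^{(\infty)}$ is well-founded (each coordinate ranges over $\mathbb{N}_0$), the nonempty set $\{g\in F_{\psi(f)}:g\le f\}$ has a minimal element $g_0$; this $g_0$ is then also minimal in all of $F_{\psi(f)}$, because any $g\in F_{\psi(f)}$ with $g<g_0$ would satisfy $g<g_0\le f$ and hence lie in that set, contradicting minimality of $g_0$. By uniqueness $g_0=\phi(\psi(f))$, and therefore $\phi(\psi(f))=g_0\le f$, as desired.

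I do not expect a real obstacle here: the argument is a straightforward unwinding of the fact that $f$ is compatible with $\psi(f)$, together with the observation that the minimum of a set lies below every member of the set. The only mildly delicate point is the passage from ``unique minimal element'' to ``least element'' in the preceding paragraph; this step disappears entirely if Theorem~\ref{thm::2}$(b)$ is read as already asserting that $\phi(\beta)$ is the least element of $F_\beta$, in which case the containment $\phi(\psi(f))\le f$ is literally immediate from $f\in F_{\psi(f)}$. It is worth remarking that the analogue $\psi(\phi(\beta))=\beta$ in part $(e)$ is genuinely harder, since there one must also show that $\phi(\beta)$ admits no strictly larger compatible Betti sequence than $\beta$; by contrast, $(f)$ needs only the ``easy'' inclusion $f\in F_{\psi(f)}$, exactly as in one half of a Galois connection.
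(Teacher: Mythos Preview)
Your argument for part $(f)$ is correct. Note, however, that this paper does not give its own proof of Theorem~\ref{thm::2}: the whole theorem is quoted from Bj\"orner--Kalai~\cite{BK88}, so there is no in-paper proof of $(f)$ to compare against directly. What the paper does prove is the sharpened statement $(f^*)$ of Theorem~\ref{thm::main2}, and the route there is genuinely different from yours. You use only the abstract definitions: $\psi(f)\in B_f$ gives $f\in F_{\psi(f)}$, and then the unique minimal element of $F_{\psi(f)}$ must lie below $f$ (your well-foundedness step is fine, since $\{g\in F_{\psi(f)}:g\le f\}$ is in fact finite). The paper instead works with the explicit formula $\psi_k(f)=\partial^k(f_k)+\partial^{k+1}(f_{k+1})-f_{k+1}$, computes that $\psi(f-\delta(f))=\psi(f)+\delta_+(f)$, and then invokes the injectivity and order-preservation of $\phi$ from~\cite{BK88} to obtain the exact identity $\phi(\psi(f)+\delta_+(f))=f-\delta(f)$ and the sharper bound $\phi(\psi(f))\le f-\delta(f)-\delta_+(f)$. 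Your argument is shorter and conceptually cleaner for the bare inequality $(f)$, but it cannot see the error term $\delta$; the paper's computational approach is what buys the refinement that is the point of Theorem~\ref{thm::main2}.
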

	
	We note that only the maximal part contains inequalities, which inspired us to introduce a new function to measure the discrepancy in these inequalities. The \textit{error function} $\delta_{k-1}$ is defined by 
\[
\delta_{k-1}(n)=\ell,
\]
where $\ell$ is the number of indices $t$ in the unique expansion \eqref{equ::num} for which $a_t = t$. For completeness, we define $\delta_{k-1}(0)=0$.
	
	In this paper, we prove the following result, which shows how the error function $\delta_{k}$ affects inequalities related to the combinatorial shadow functions $\partial_k$ and $\partial^k$.
	
	\begin{theorem}\label{thm::main1}
		Let $n$, $m$  be nonnegative integers and let  $\epsilon_k$ be an integer with  $0\leq\epsilon_k\leq \delta_k(n+m)$. Then the following statements hold:
		\begin{enumerate}[$(a)$]\setlength{\itemsep}{0pt}
			\item $\partial_k(n)\leq m-\epsilon_k$ if and only if $ n\leq \partial^k(n+m)$;
			\item $n=\partial^k(n+m)$ if and only if $\partial_k(n)+\delta_k(n+m)=m$;
			\item $\partial_k(n)=m$ if and only if $n= \partial^k(n+m)$ and $\delta_k(n+m)=0$.
		\end{enumerate}
	\end{theorem}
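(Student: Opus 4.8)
The plan is to reduce all three equivalences to two identities in the combinatorial number system, both obtained by termwise application of Pascal's rule, together with the standard fact that $\partial_k$ is non-decreasing (e.g.\ because $\partial_k(N)$ is the size of the lower shadow of the first $N$ subsets in reverse-lexicographic order). Write the expansion \eqref{equ::num} of $n$ (with $k$ replaced by $k+1$) as $n=\sum_{t=j}^{k+1}\binom{a_t}{t}$ with $a_{k+1}>\cdots>a_j\ge j\ge 1$, so that $a_t\ge t$ for every $t$. Applying $\binom{a_t}{t}+\binom{a_t}{t-1}=\binom{a_t+1}{t}$ to each term shows that $n+\partial_k(n)=\sum_{t=j}^{k+1}\binom{a_t+1}{t}$, and since the exponents $a_t+1$ are again strictly decreasing with $a_t+1>t\ge j\ge 1$, this is itself an expansion of the form \eqref{equ::num}. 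Hence $\delta_k\!\left(n+\partial_k(n)\right)=0$ and, subtracting $1$ from each exponent, $\partial^k\!\left(n+\partial_k(n)\right)=n$. Call this identity (I). I then claim identity (II): $\partial^k(N)+\partial_k\!\left(\partial^k(N)\right)+\delta_k(N)=N$ for all $N\ge 0$. When $\delta_k(N)=0$, all exponents in the expansion of $N$ satisfy $a_t>t$, so $\partial^k(N)=\sum\binom{a_t-1}{t}$ is itself a legitimate expansion (now $a_t-1\ge t$), and Pascal's rule gives $\partial_k\!\left(\partial^k(N)\right)=\sum\binom{a_t-1}{t-1}=N-\partial^k(N)$. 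For general $N$, note that $a_t=t$ forces $a_{t-1}=t-1$ (since $t-1\le a_{t-1}<a_t=t$), so the terms with $a_t=t$ form a bottom block of size $\delta:=\delta_k(N)$, each equal to $\binom{t}{t}=1$; deleting them leaves $N':=N-\delta$ with $\delta_k(N')=0$ and $\partial^k(N')=\partial^k(N)$ (a term $\binom{t}{t}$ contributes $\binom{t-1}{t}=0$ to $\partial^k$), and (II) follows by applying the $\delta_k=0$ case to $N'$.

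\textbf{A gap for $n\mapsto n+\partial_k(n)$.} Next I would record that $n\mapsto n+\partial_k(n)$ never takes a value in the interval $(N-\delta_k(N),\,N]$. Indeed those integers are exactly $N'+1,\dots,N'+\delta$ where $N'=N-\delta$; appending $r$ more terms $\binom{t}{t}$ (for $1\le r\le\delta$) to the bottom of the expansion of $N'$ produces a genuine expansion of $N'+r$ containing at least one ``exponent $=$ index'' term, so $\delta_k(N'+r)\ge 1$, whereas by (I) every value of $n+\partial_k(n)$ has $\delta_k$-value $0$. Moreover $n\mapsto n+\partial_k(n)$ is strictly increasing, because $\partial_k$ is non-decreasing.

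\textbf{Deducing the three equivalences.} Put $N=n+m$ and $\delta=\delta_k(N)$. For part (a): if $n\le\partial^k(N)$, then monotonicity of $\partial_k$ and (II) give $\partial_k(n)\le\partial_k\!\left(\partial^k(N)\right)=N-\partial^k(N)-\delta\le m-\delta\le m-\epsilon_k$; conversely, $\partial_k(n)\le m-\epsilon_k$ gives $n+\partial_k(n)\le N$, and since $\partial^k(N)+\partial_k\!\left(\partial^k(N)\right)=N-\delta$ by (II) and $n\mapsto n+\partial_k(n)$ is strictly increasing, the inequality $n>\partial^k(N)$ would force $n+\partial_k(n)\in(N-\delta,N]$, which the previous paragraph forbids; hence $n\le\partial^k(N)$. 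For part (b): ``$\Rightarrow$'' is immediate from (II), since $m=N-\partial^k(N)=\partial_k(n)+\delta$; for ``$\Leftarrow$'', $\partial_k(n)=m-\delta$ and part (a) with $\epsilon_k=\delta$ give $n\le\partial^k(N)$, and if $m\ge 1$ and $n<\partial^k(N)$, applying part (a) to the pair $(n+1,m-1)$ (same sum $N$) with $\epsilon_k=\delta$ yields $\partial_k(n+1)\le\partial_k(n)-1$, contradicting monotonicity of $\partial_k$; the case $m=0$ forces $\partial_k(n)=0$, hence $n=0=\partial^k(0)$ (as $\partial_k(t)\ge 1$ for $t\ge 1$). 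For part (c): ``$\Rightarrow$'' follows from (I) applied with $N=n+\partial_k(n)$, which gives $\delta_k(N)=0$ and $\partial^k(N)=n$; and ``$\Leftarrow$'' follows from part (b) with $\delta_k(N)=0$.

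\textbf{Main obstacle.} The technical heart is identity (II), together with the combinatorial-number-system bookkeeping in the first two paragraphs: one must check that each modified sum is an honest expansion of the form \eqref{equ::num} (strictly decreasing exponents, bottom index $\ge 1$), that the ``exponent $=$ index'' terms sit precisely at the bottom, and dispose of the degenerate cases (such as $m=0$, or an expansion made up solely of terms $\binom{t}{t}$). Once that is pinned down, parts (a)--(c) are essentially formal consequences of (I), (II), the gap statement, and the monotonicity of $\partial_k$.
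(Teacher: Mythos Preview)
Your proof is correct. The core identities you isolate---(I), (II), and monotonicity of $\partial_k$---are precisely the paper's Lemmas~\ref{lem::2.3}, \ref{lem::2.4}, and \ref{lem::2.5}(b), so the foundations coincide. Where you diverge is in how (a) and (b) are derived from them: for the implication $\partial_k(n)\le m-\epsilon_k\Rightarrow n\le\partial^k(n+m)$ the paper proves directly (Lemma~\ref{lem::2.5}(a), via a case-by-case comparison of binomial expansions using Lemma~\ref{lem::2.2}) that $d^k(n)\le M$ forces $n\le\partial^k(M)$, whereas you argue by contradiction using your ``gap'' observation that the image of $n\mapsto n+\partial_k(n)$ misses $(N-\delta_k(N),N]$; and for the reverse implication in (b) the paper uses injectivity of $d^k$ (Lemma~\ref{lem::2.5}(c)), whereas you bootstrap from (a) applied to the shifted pair $(n+1,m-1)$. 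Your route sidesteps the explicit expansion comparisons of Lemma~\ref{lem::2.2} entirely, at the price of importing monotonicity of $\partial_k$ as a known fact; the paper's route is more self-contained but slightly more laborious.
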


	As an application of Theorem \ref{thm::main1}, we sharpen the inequalities in parts $(c)$ and $(f)$ of Theorem \ref{thm::2} and provide an equivalent reformulation of part $(d)$.  For a sequence $f = (f_0, f_1, \ldots) \in \mathbb{N}_0^{(\infty)}$,  we define
	\[
	\delta(f) = (0, \delta_1(f_1), \delta_2(f_2), \ldots) \quad \text{and} \quad \delta_+(f) = (\delta_1(f_1), \delta_2(f_2), \ldots).
	\]

	\begin{theorem}\label{thm::main2}
		\begin{enumerate}
			\item[$(a)$] Suppose that $f$ is the $f$-vector of some simplicial complex. Then the set $B_{f}=\{\beta\in \mathbb{N}_0^{(\infty)}: f\sim\beta\}$ has a unique maximal element. Define $\psi(f)=\max B_f$.
			\item[$(b)$] Suppose that  $\beta\in \mathbb{N}_0^{(\infty)}$. The set $F_{\beta}=\{f\in \mathbb{N}_0^{(\infty)}: f\sim\beta\}$ has a unique minimal element. Define $\phi(\beta)=\min F_\beta$.
			\item[$(c^*)$] Suppose that $f$ is the $f$-vector of some simplicial complex, and $\beta\in \mathbb{N}_0^{(\infty)}$.  Then $\beta=\psi(f)$ if and only if  $\chi_{-1}=1$, and $\partial_k(\chi_{k}+\beta_k)+\delta_k(f_k)=\chi_{k-1}$ for all $k\geq 1$, which is the case if and only if $\chi_{-1}=1$, and $f_k-\chi_{k-1}=\partial^k(f_k)$ for all $k\geq 1$.
			\item[$(d^*)$] Suppose that $(f,\beta)$ is a compatible pair. Then $f=\phi(\beta)$ if and only if  $\partial_k(\chi_{k}+\beta_k)=\chi_{k-1}$ for all $k\geq 1$, which is the case if and only if $ f_k-\chi_{k-1}=\partial^k(f_k)$ for all $k\geq 1$, and $\delta(f)=0$.
			\item[$(e)$] $\psi(\phi(\beta))=\beta$ for all $\beta\in\mathbb{N}_0^{(\infty)}$.
			\item[$(f^*)$] $\phi(\psi(f)+\delta_+(f))=f-\delta(f)$ for all $f$-vectors $f$. Therefore,  $\phi(\psi(f))\leq f-\delta(f)-\delta_+(f)$, with equality holding  if and only if $\delta(f)=0$.
		\end{enumerate}
	\end{theorem}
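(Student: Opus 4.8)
The plan is to derive Theorem~\ref{thm::main2} entirely from Theorems~\ref{thm::1}, \ref{thm::2} and \ref{thm::main1}, so that parts $(a)$, $(b)$, $(e)$ need no new argument (they repeat Theorem~\ref{thm::2}). For $(c^*)$ I would start from Theorem~\ref{thm::2}$(c)$ and, for each $k\ge 1$, set $n=\chi_k+\beta_k$ and $m=\chi_{k-1}$, so that $n+m=f_k$ by the identity $f_k-\chi_{k-1}=\chi_k+\beta_k$. Theorem~\ref{thm::main1}$(a)$ with $\epsilon_k=0$ turns the lower bound $\partial_k(n)\le m$ into $n\le\partial^k(f_k)$; applying the same statement to the pair $(n+1,m-1)$ (which has the same sum $f_k$, so the constraint on $\epsilon_k$ is unchanged; the case $m=0$ being trivial) turns the upper bound $m\le\partial_k(n+1)$ into $n\ge\partial^k(f_k)$. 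Hence the two-sided inequality of Theorem~\ref{thm::2}$(c)$ is equivalent to $n=\partial^k(f_k)$, i.e. $f_k-\chi_{k-1}=\partial^k(f_k)$, and Theorem~\ref{thm::main1}$(b)$ rewrites this as $\partial_k(\chi_k+\beta_k)+\delta_k(f_k)=\chi_{k-1}$, giving $(c^*)$. Part $(d^*)$ is even more direct: for a compatible pair the condition $\partial_k(\chi_k+\beta_k)=\chi_{k-1}$ of Theorem~\ref{thm::2}$(d)$ is, term by term, exactly Theorem~\ref{thm::main1}$(c)$ applied with $n=\chi_k+\beta_k$, $m=\chi_{k-1}$, while ``$\delta_k(f_k)=0$ for all $k\ge1$'' is the statement $\delta(f)=0$.

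The heart of the theorem is $(f^*)$. Fix an $f$-vector $f$, write $\beta=\psi(f)$, and put $g=f-\delta(f)$, $\gamma=\psi(f)+\delta_+(f)$; the goal is $\phi(\gamma)=g$. First, a bookkeeping identity: using the index shift built into $\delta_+$, a short telescoping computation gives $\chi_{k-1}(g,\gamma)=\chi_{k-1}(f,\beta)-\delta_k(f_k)$ for every $k\ge 0$ (in particular $\chi_{-1}(g,\gamma)=1$). Second, a small lemma about the shadow functions: for every $N\ge 0$ and $k\ge1$ one has $\partial^k(N-\delta_k(N))=\partial^k(N)$ and $\delta_k(N-\delta_k(N))=0$. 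This is proved by noting that in the canonical expansion \eqref{equ::num} the indices $t$ with $a_t=t$ form a bottom segment (if $a_t=t$ then $a_{t-1}=t-1$), that these $\ell=\delta_k(N)$ terms each equal $1$ and contribute $0$ to $\partial^k$, and that deleting them leaves a valid canonical expansion of $N-\delta_k(N)$ with no such terms. Combining these with $(c^*)$ applied to $(f,\beta)$ yields $g_k-\chi_{k-1}(g,\gamma)=f_k-\chi_{k-1}(f,\beta)=\partial^k(f_k)=\partial^k(g_k)$ and $\delta(g)=0$. It remains to check $(g,\gamma)$ is compatible: by Theorem~\ref{thm::1} it suffices to verify $\chi_{-1}(g,\gamma)=1$ (done) and $\partial_k(\chi_k(g,\gamma)+\gamma_k)\le\chi_{k-1}(g,\gamma)$; since $\chi_k(g,\gamma)+\gamma_k=g_k-\chi_{k-1}(g,\gamma)=\partial^k(g_k)$, Theorem~\ref{thm::main1}$(b)$ with $n=\partial^k(g_k)$ and $m=g_k-\partial^k(g_k)$ gives $\partial_k(\partial^k(g_k))+\delta_k(g_k)=g_k-\partial^k(g_k)=\chi_{k-1}(g,\gamma)$, hence even equality. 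Now $(d^*)$ (equivalently Theorem~\ref{thm::2}$(d)$) applies to the compatible pair $(g,\gamma)$ and gives $g=\phi(\gamma)$, which is the asserted identity $\phi(\psi(f)+\delta_+(f))=f-\delta(f)$.

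For the concluding inequality I would establish the monotonicity estimate
\[
\phi(\beta+\eta)\ \ge\ \phi(\beta)+\eta \qquad\text{for all }\beta,\eta\in\mathbb{N}_0^{(\infty)}.
\]
This follows from the recursive description of $\phi$ implicit in Theorem~\ref{thm::2}$(d)$ — build $\phi(\beta)$ from the top down via $\chi_{k-1}=\partial_k(\chi_k+\beta_k)$, $f_k=\chi_k+\beta_k+\chi_{k-1}$ and $f_0=\chi_0+\beta_0+1$; the resulting pair is automatically compatible by Theorem~\ref{thm::1} — together with the classical monotonicity of the Kruskal--Katona shadow $\partial_k$: a downward induction shows $\chi_{k-1}(\phi(\beta+\eta),\beta+\eta)\ge\chi_{k-1}(\phi(\beta),\beta)$ for all $k$, and the formula for $f_k$ then gives the claim. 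Taking $\beta=\psi(f)$, $\eta=\delta_+(f)$ and using the identity just proved yields $\phi(\psi(f))+\delta_+(f)\le\phi(\psi(f)+\delta_+(f))=f-\delta(f)$, i.e. $\phi(\psi(f))\le f-\delta(f)-\delta_+(f)$. If $\delta(f)=0$ then $\delta_+(f)=0$ and the main identity degenerates to $\phi(\psi(f))=f$, so the bound is an equality; for the converse one must control when adding $\delta_+(f)$ leaves the relevant values $\partial_k(\chi_k(\phi(\psi(f)),\psi(f))+\psi(f)_k)$ unchanged, that is, when the inductive inequalities above are tight. I expect this strictness analysis of $\partial_k$ to be the main obstacle in the proof (and the point most in need of care in the precise formulation of the equality clause).
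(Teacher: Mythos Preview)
Your approach is sound and runs parallel to the paper's, with a few methodological differences worth noting. For $(c^*)$ the paper does not manipulate the two-sided inequality of Theorem~\ref{thm::2}$(c)$ directly; instead it invokes the explicit formula $\psi_k(f)=\partial^k(f_k)+\partial^{k+1}(f_{k+1})-f_{k+1}$ from \cite[Theorem~5.3]{BK88} and reads off the characterization $f_k-\chi_{k-1}=\partial^k(f_k)$ from equality in the resulting upper bound on $\beta_k$. Your route via Theorem~\ref{thm::main1}$(a)$ applied to the pairs $(n,m)$ and $(n+1,m-1)$ is a legitimate alternative that avoids this external input. For the identity in $(f^*)$ the paper argues more structurally: it first observes from $(c^*)$ and $(d^*)$ that $\phi(\psi(g))=g$ whenever $\delta(g)=0$, checks via Kruskal--Katona that $g=f-\delta(f)$ is again an $f$-vector with $\delta(g)=0$, and then computes $\psi(g)=\psi(f)+\delta_+(f)$ from the explicit formula for $\psi$. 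Your direct verification that the pair $(g,\gamma)$ satisfies the hypotheses of $(d^*)$ via the telescoping identity for $\chi_{k-1}(g,\gamma)$ is equivalent in content and more self-contained. For the inequality the paper simply cites that $\phi$ is injective and order-preserving (\cite[Theorem~5.2]{BK88}); your lemma $\phi(\beta+\eta)\ge\phi(\beta)+\eta$ is the precise statement actually needed here, and your inductive proof of it supplies what the bare citation does not.

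Your hesitation over the equality clause is well placed. The paper also dispatches it in one line, asserting that equality in $\phi(\psi(f))+\delta_+(f)\le\phi(\psi(f)+\delta_+(f))$ holds if and only if $\delta(f)=0$ as a consequence of injectivity of $\phi$; but injectivity only gives $\phi(\psi(f))\ne\phi(\psi(f)+\delta_+(f))$ when $\delta_+(f)\ne 0$, which is a different statement. Indeed the ``only if'' direction as literally stated appears to fail already for the edge $f=(2,1,0,\ldots)$: here $\delta(f)=(0,1,0,\ldots)\ne 0$, $\delta_+(f)=(1,0,\ldots)$, $\psi(f)=0$, $\phi(\psi(f))=(1,0,\ldots)$, and $f-\delta(f)-\delta_+(f)=(1,0,\ldots)$, so equality holds although $\delta(f)\neq 0$. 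So your instinct that this clause needs a more careful formulation is exactly right; this is not a defect of your plan but of the statement itself.
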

	
By applying Theorem \ref{thm::main1} to Theorem \ref{thm::1}, we also obtain the following result.

	\begin{corollary}\label{cor::1}
		Suppose that $f=(f_0,f_1,\ldots),\beta=(\beta_0,\beta_1,\ldots)\in \mathbb{N}_0^{(\infty)}$ are two given  sequences and $\boldsymbol{k}$ is a field. Let $\epsilon\in \mathbb{N}_0^{(\infty)}$ be any sequence satisfying $0\leq\epsilon\leq \delta(f)$. Then the following conditions are equivalent:
		\begin{enumerate}\setlength{\itemsep}{0pt}
			\item[$(a)$] $f$ is the $f$-vector and $\beta$ the Betti sequence over $\boldsymbol{k}$ of some simplicial complex,
			\item[$(b^*)$]   $\chi_{-1}=1$, and $\partial_k(\chi_{k}+\beta_k)\leq \chi_{k-1}-\epsilon_k$ for all $k\geq 1$,
			\item[$(b^+)$]  $\chi_{-1}=1$, and $0\leq f_k-\chi_{k-1}\leq \partial^k(f_k)$ for all $k\geq 1$.
		\end{enumerate}
	\end{corollary}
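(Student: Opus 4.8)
The plan is to reduce everything to Theorem~\ref{thm::main1}(a) through the substitution $n = \chi_k + \beta_k$, $m = \chi_{k-1}$, made separately for each $k \ge 1$. The identity $f_k - \chi_{k-1} = \chi_k + \beta_k$ recorded in the introduction gives $n + m = f_k$, so $\delta_k(n+m) = \delta_k(f_k)$ and the hypothesis $0 \le \epsilon \le \delta(f)$ becomes exactly $0 \le \epsilon_k \le \delta_k(n+m)$, as required by Theorem~\ref{thm::main1}(a). Under this substitution the inequality of $(b^*)$ is $\partial_k(n) \le m - \epsilon_k$, the inequality $f_k - \chi_{k-1} \le \partial^k(f_k)$ of $(b^+)$ is $n \le \partial^k(n+m)$, and the side condition $0 \le f_k - \chi_{k-1}$ of $(b^+)$ is $n \ge 0$.

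First I would check that $n, m \ge 0$ hold automatically in each of $(b^*)$ and $(b^+)$, so that Theorem~\ref{thm::main1}(a) applies term by term. In $(b^*)$, the expression $\partial_k(n)$ is defined only for $n \ge 0$, and then $0 \le \partial_k(n) \le m - \epsilon_k$ together with $\epsilon_k \ge 0$ forces $m \ge 0$; in $(b^+)$, we are given $n = f_k - \chi_{k-1} \ge 0$, and since $\partial^k(N) \le N$ for every $N$ (compare the defining binomial expansions term by term), $n \le \partial^k(f_k) \le f_k$ gives $m = f_k - n \ge 0$. Theorem~\ref{thm::main1}(a) then yields, for each $k \ge 1$,
\[
\partial_k(\chi_k + \beta_k) \le \chi_{k-1} - \epsilon_k \iff \chi_k + \beta_k \le \partial^k(f_k),
\]
and, accounting for the automatic bound $\chi_k + \beta_k = f_k - \chi_{k-1} \ge 0$, the right-hand side is precisely $0 \le f_k - \chi_{k-1} \le \partial^k(f_k)$. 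Quantifying over all $k \ge 1$ and keeping the common equation $\chi_{-1} = 1$ proves $(b^*) \Leftrightarrow (b^+)$.

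Finally, since $(b^+)$ does not depend on $\epsilon$, the condition $(b^*)$ for the given $\epsilon$ is equivalent to $(b^*)$ for the zero sequence $\epsilon = (0,0,\ldots)$ (both being equivalent to $(b^+)$), which also satisfies $0 \le \epsilon \le \delta(f)$; but the latter is verbatim condition $(b)$ of Theorem~\ref{thm::1}, hence equivalent to $(a)$. This closes the cycle $(a) \Leftrightarrow (b^*) \Leftrightarrow (b^+)$. The one point deserving care is the nonnegativity bookkeeping just described: Theorem~\ref{thm::main1}(a) presupposes $n, m \ge 0$, and it is the inequality $\partial^k(N) \le N$ together with the obvious sign $\partial_k(N) \ge 0$ that make these hold in $(b^+)$ and $(b^*)$ respectively; the degenerate cases $n = 0$, $m = 0$, or $f_k = 0$ are absorbed by the conventions $\partial_k(0) = \partial^k(0) = \delta_k(0) = 0$.
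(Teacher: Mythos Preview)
Your proof is correct and follows exactly the approach the paper intends: the paper does not give a separate proof of Corollary~\ref{cor::1}, stating only that it is obtained ``by applying Theorem~\ref{thm::main1} to Theorem~\ref{thm::1}'', and your argument spells out precisely this application via the substitution $n=\chi_k+\beta_k$, $m=\chi_{k-1}$ (so $n+m=f_k$). Your attention to the nonnegativity of $n$ and $m$ needed to invoke Theorem~\ref{thm::main1}(a), and to the $\epsilon$-independence of $(b^+)$ that reduces $(b^*)$ to condition~$(b)$ of Theorem~\ref{thm::1}, fills in details the paper leaves implicit.
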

	\begin{remark}	\rm
		It should be noted that  condition  $(b^*)$ in Corollary \ref{cor::1} provides a refinement of condition $(b)$ from Theorem \ref{thm::1}, while condition $(b^+)$ establishes a new characterization of compatible pairs $(f,\beta)$. Furthermore, for any simplicial complex,  the equality 
		\[
		f_k - \chi_{k-1} = \dim Z_k
		\] 
		holds, where $Z_k$ represents the space of $k$-cycles (see \cite[Remark 4.4]{BK88}). Therefore, through homological interpretation of the inequality   $f_k - \chi_{k-1} \leq \partial^k(f_k)$ in Corollary \ref{cor::1} $(b^+)$, we obtain 
		\[
		\dim Z_k \leq \partial^k(f_k).
		\]  
	\end{remark}

Let $m$ be a square-free positive integer, and let $P(m)$ be the set of prime factors of $m$. The family  
\[
\Delta_n := \{P(m): m \text{ is square-free and } m \leq n\}
\]
is a simplicial complex, called the \textit{Bj\"{o}rner complex}. It was shown by Bj\"{o}rner \cite{B11} that the Prime Number Theorem and the Riemann Hypothesis are equivalent to certain asymptotic estimates of the reduced Euler characteristics of these complexes as $n \to \infty$. 

Let $\sigma_{k}(n)$ (resp. $\sigma_{k}^{\mathrm{odd}}(n)$) denote the number of square-free integers (resp. odd square-free integers) $\leq n$ with exactly $k$ prime factors. In \cite{B11}, Bj\"{o}rner applied Theorem \ref{thm::1} to the Bj\"{o}rner complex and proved that
\[
\partial_k(\sigma_{k+1}^{\mathrm{odd}}(n))\leq\sigma_{k}^{\mathrm{odd}}(n/2)\quad \text{for all } k\geq 1.
\] 
Combining this result with Theorem  \ref{thm::main1}, we immediately derive the following number-theoretic inequality.
	\begin{corollary}
		For all $k\geq1$,
		\begin{equation*}\label{eq::sigma_k}
			\sigma_{k+1}^{\mathrm{odd}}(n)\leq\partial^k(\sigma_{k+1}(n)).
		\end{equation*}
	\end{corollary}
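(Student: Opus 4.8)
The plan is to deduce this inequality directly from Bj\"{o}rner's estimate $\partial_k(\sigma_{k+1}^{\mathrm{odd}}(n))\leq\sigma_{k}^{\mathrm{odd}}(n/2)$ (valid for all $k\geq 1$) by feeding it into part $(a)$ of Theorem~\ref{thm::main1}. The one combinatorial fact required is the counting identity
\[
\sigma_{k+1}(n)=\sigma_{k+1}^{\mathrm{odd}}(n)+\sigma_{k}^{\mathrm{odd}}(n/2),\qquad k\geq 1.
\]
To verify it, I would partition the square-free integers $\leq n$ with exactly $k+1$ prime factors according to parity. The odd ones are counted by $\sigma_{k+1}^{\mathrm{odd}}(n)$. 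Each even one factors uniquely as $2m'$ with $m'$ odd and square-free; since $k\geq 1$, such an $m'$ has exactly $k\geq 1$ prime factors, all odd, and $2m'\leq n$ is equivalent to $m'\leq n/2$. The assignment $m'\mapsto 2m'$ is then a bijection from the odd square-free integers $\leq n/2$ with $k$ prime factors onto the even square-free integers $\leq n$ with $k+1$ prime factors, which contributes the term $\sigma_{k}^{\mathrm{odd}}(n/2)$.

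With this identity in hand, I would apply Theorem~\ref{thm::main1}$(a)$, taking the two integers denoted $n$ and $m$ in that statement to be $\sigma_{k+1}^{\mathrm{odd}}(n)$ and $\sigma_{k}^{\mathrm{odd}}(n/2)$ respectively (the clash with the argument $n$ of $\sigma$ is unfortunate but harmless), together with $\epsilon_k=0$; this last choice is admissible because $0\leq\epsilon_k\leq\delta_k(\sigma_{k+1}(n))$ holds trivially. By the identity just proved, the sum of these two integers equals $\sigma_{k+1}(n)$, and Bj\"{o}rner's estimate is precisely the left-hand condition $\partial_k(n)\leq m-\epsilon_k$ in the equivalence of part $(a)$. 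Hence part $(a)$ delivers the right-hand condition $n\leq\partial^{k}(n+m)$, that is,
\[
\sigma_{k+1}^{\mathrm{odd}}(n)\leq\partial^{k}(\sigma_{k+1}(n)),
\]
which is the claimed inequality.

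I do not expect a genuine obstacle here: once the substitution is spotted, the corollary is an immediate instance of Theorem~\ref{thm::main1}$(a)$ with $\epsilon_k=0$. The only step carrying any content is the counting identity above; the hypothesis $k\geq 1$---which is forced anyway, as Bj\"{o}rner's estimate is only stated for $k\geq 1$---serves mainly to keep the description of the bijection transparent, ensuring the odd cofactor $m'$ is a genuine product of one or more odd primes.
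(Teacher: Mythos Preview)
Your proposal is correct and follows exactly the route the paper intends: the corollary is stated immediately after Bj\"{o}rner's inequality with the remark that it follows by combining that inequality with Theorem~\ref{thm::main1}, and your application of part~$(a)$ with $\epsilon_k=0$ is precisely this combination. The one detail you have made explicit that the paper leaves tacit is the parity decomposition $\sigma_{k+1}(n)=\sigma_{k+1}^{\mathrm{odd}}(n)+\sigma_{k}^{\mathrm{odd}}(n/2)$, which is indeed needed to identify the sum $n+m$ in Theorem~\ref{thm::main1}$(a)$ with $\sigma_{k+1}(n)$.
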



	\section{Proofs of Theorems \ref{thm::main1} and \ref{thm::main2}}\label{section::2}

	In order to prove Theorem \ref{thm::main1}, we need a series of number-theoretic lemmas.
	
	\begin{lemma}[{\cite{BK88}}, {\cite[Lemma 3.1]{Fro08}}]\label{lem::2.1}
		Let $n$ and $k$ be positive integers. Then there is a unique expansion 
		\begin{equation*}
			n=\binom{a_{k}}{k}+\binom{a_{k-1}}{k-1}+\cdots+\binom{a_i}{i},
		\end{equation*}
		such that $a_k>a_{k-1}>\cdots>a_i\geq i\geq 1$. This expansion is called the $(k-1)$-dimensional representation of $n$.
	\end{lemma}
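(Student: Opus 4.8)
The plan is to prove Lemma~\ref{lem::2.1} by a greedy (or ``macaulay-style'') argument, extracting the binomial coefficients one at a time from the largest index down to $i$. First I would establish \emph{existence}: given $n\geq 1$ and $k\geq 1$, let $a_k$ be the unique integer with $\binom{a_k}{k}\leq n<\binom{a_k+1}{k}$ — this exists and satisfies $a_k\geq k$ because $\binom{k}{k}=1\leq n$ and $\binom{m}{k}\to\infty$. Then set $n'=n-\binom{a_k}{k}$. If $n'=0$ we stop (taking $i=k$); otherwise I would recurse on $n'$ with parameter $k-1$ to obtain an expansion $n'=\binom{a_{k-1}}{k-1}+\cdots+\binom{a_i}{i}$ with $a_{k-1}>\cdots>a_i\geq i\geq 1$. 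The only thing to check is the inequality $a_k>a_{k-1}$: this follows from $n'=n-\binom{a_k}{k}<\binom{a_k+1}{k}-\binom{a_k}{k}=\binom{a_k}{k-1}$ together with the fact that the leading term $\binom{a_{k-1}}{k-1}$ of the expansion of $n'$ is at most $n'<\binom{a_k}{k-1}$, forcing $a_{k-1}<a_k$ (here I use monotonicity of $m\mapsto\binom{m}{k-1}$ on integers $\geq k-1$). Since $k$ strictly decreases at each step and $n$ stays a nonnegative integer, the recursion terminates, giving the desired expansion.

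For \emph{uniqueness} I would argue by induction on $k$ (or on $n$), the key input being the estimate
\[
\binom{a_k}{k}\leq \binom{a_k}{k}+\binom{a_{k-1}}{k-1}+\cdots+\binom{a_i}{i}<\binom{a_k+1}{k}
\]
valid for any strictly decreasing sequence $a_k>a_{k-1}>\cdots>a_i\geq i\geq 1$. The upper bound here is the crucial combinatorial identity: one shows $\sum_{j=i}^{k}\binom{a_j}{j}\leq\binom{a_k+1}{k}-1$ by ``pushing up'' each $a_j$ to its maximal allowed value $a_k-(k-j)$ and telescoping via Pascal's rule $\binom{a+1}{j}=\binom{a}{j}+\binom{a}{j-1}$. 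Granting this, the top index $a_k$ in any valid expansion of $n$ is forced to be the unique integer with $\binom{a_k}{k}\leq n<\binom{a_k+1}{k}$; subtracting $\binom{a_k}{k}$ and applying the induction hypothesis to $n-\binom{a_k}{k}$ (with parameter $k-1$, noting $n-\binom{a_k}{k}<\binom{a_k}{k-1}$ so that its top index is automatically $<a_k$) pins down the remaining terms uniquely. The edge case where $n-\binom{a_k}{k}=0$, so the expansion truncates, must be handled separately but is immediate.

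The main obstacle is the telescoping upper-bound estimate $\sum_{j=i}^k\binom{a_j}{j}<\binom{a_k+1}{k}$, since this is what makes the greedy choice of $a_k$ both possible and forced; everything else is bookkeeping with the induction. I would isolate this as a preliminary sublemma (or simply cite it, as the statement already does — Lemma~\ref{lem::2.1} is attributed to \cite{BK88} and \cite[Lemma 3.1]{Fro08}), prove it by the Pascal-rule telescoping sketched above, and then the existence and uniqueness halves of Lemma~\ref{lem::2.1} fall out by the greedy-extraction and strong-induction arguments respectively.
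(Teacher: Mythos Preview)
The paper does not supply its own proof of Lemma~\ref{lem::2.1}; it is stated with citations to \cite{BK88} and \cite[Lemma 3.1]{Fro08} and used as a black box. Your greedy-extraction argument for existence together with the telescoping bound $\sum_{j=i}^{k}\binom{a_j}{j}<\binom{a_k+1}{k}$ for uniqueness is the standard proof found in those references, and it is correct as you have outlined it.
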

	
	\begin{lemma}\label{lem::2.2}
		Let $n$, $m$, and $k$ be positive integers. Suppose that  
		\[
		n = \binom{a_{k+1}}{k+1} + \binom{a_k}{k} + \cdots + \binom{a_i}{i}
		\]
		and
		\[
		m = \binom{b_{k+1}}{k+1} + \binom{b_k}{k} + \cdots + \binom{b_j}{j}
		\]
		are the $k$-dimensional representations of $n$ and $m$, respectively. Then the following statements hold:
		\begin{enumerate}[$(a)$]
			\item $n=m$ if and only if $i=j$ and $a_t = b_t$ for all $t\in\{i,\ldots,k+1\}$;
			\item $n < m$ if and only if either
			$i > j$ and $a_t = b_t$ for all $t\in\{i,\ldots,k+1\}$, or the maximal index $t\in\{\max\{i,j\},\ldots,k+1\}$ with  $a_t \neq b_t$ satisfies $a_t < b_t$.
		\end{enumerate}
	\end{lemma}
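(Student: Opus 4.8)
The plan is to deduce both parts of Lemma~\ref{lem::2.2} simultaneously from one auxiliary inequality together with a trichotomy argument, rather than proving the two biconditionals directly. The auxiliary inequality is: if $d_p>d_{p-1}>\cdots>d_q\geq q\geq 1$ are integers, then
\[
\binom{d_p}{p}+\binom{d_{p-1}}{p-1}+\cdots+\binom{d_q}{q}<\binom{d_p+1}{p};
\]
in words, the part of a $k$-dimensional representation lying strictly below a term $\binom{a_s}{s}$ contributes less than $\binom{a_s+1}{s}$. Granting this, I would list the five mutually exclusive conditions on the pair of representations: $(\mathrm{E})$ $i=j$ and $a_t=b_t$ for all $t\in\{i,\dots,k+1\}$; $(\mathrm{L}1)$ $i>j$ and $a_t=b_t$ for all $t\in\{i,\dots,k+1\}$; $(\mathrm{L}2)$ the largest $t\in\{\max\{i,j\},\dots,k+1\}$ with $a_t\neq b_t$ satisfies $a_t<b_t$; and $(\mathrm{G}1)$, $(\mathrm{G}2)$, obtained from $(\mathrm{L}1)$, $(\mathrm{L}2)$ by exchanging the roles of $(n,a,i)$ and $(m,b,j)$. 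Since $i,j\leq k+1$, the index set $\{\max\{i,j\},\dots,k+1\}$ is nonempty, so exactly one of $(\mathrm{E}),(\mathrm{L}1),(\mathrm{L}2),(\mathrm{G}1),(\mathrm{G}2)$ holds. I would then prove the forward implications $(\mathrm{E})\Rightarrow n=m$, $(\mathrm{L}1)\Rightarrow n<m$, and $(\mathrm{L}2)\Rightarrow n<m$; by the built-in symmetry, $(\mathrm{G}1)$ and $(\mathrm{G}2)$ then give $n>m$. Matching this five-way partition against the trichotomy $\{n<m,\ n=m,\ n>m\}$ forces each forward implication to be an equivalence, which is precisely what parts $(a)$ and $(b)$ assert.

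For the auxiliary inequality I would first record the identity $\sum_{t=1}^{p}\binom{c-p+t}{t}=\binom{c+1}{p}-1$ valid for all integers $c\geq p\geq 1$, proved by induction on $p$ from Pascal's rule $\binom{c}{p}+\binom{c}{p-1}=\binom{c+1}{p}$. Next, given $d_p>\cdots>d_q\geq q\geq 1$, the chain drops by at least $1$ at each step down to $d_q\geq q$, so $d_p\geq p$ and $d_t\leq d_p-(p-t)$ for every $t$; since $x\mapsto\binom{x}{t}$ is nondecreasing on the nonnegative integers, $\binom{d_t}{t}\leq\binom{d_p-p+t}{t}$. Summing over $t=q,\dots,p$ and then adding the nonnegative terms with $t=1,\dots,q-1$,
\[
\binom{d_p}{p}+\cdots+\binom{d_q}{q}\leq\sum_{t=1}^{p}\binom{d_p-p+t}{t}=\binom{d_p+1}{p}-1<\binom{d_p+1}{p}.
\]

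The three forward implications are then brief. For $(\mathrm{E})$ the two representations coincide, so $n=m$. For $(\mathrm{L}1)$, the common top block $S=\sum_{t=i}^{k+1}\binom{a_t}{t}=\sum_{t=i}^{k+1}\binom{b_t}{t}$ gives $n=S$ and $m=S+\sum_{t=j}^{i-1}\binom{b_t}{t}$, and the extra sum is at least $\binom{b_j}{j}\geq 1$, so $n<m$. For $(\mathrm{L}2)$, let $s$ be the largest index with $a_s\neq b_s$; then $a_t=b_t$ for $t\in\{s+1,\dots,k+1\}$, and since $s\geq\max\{i,j\}$ both representations contain a term at each of $s,s+1,\dots,k+1$. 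With $S=\sum_{t=s+1}^{k+1}\binom{a_t}{t}=\sum_{t=s+1}^{k+1}\binom{b_t}{t}$, the auxiliary inequality applied to $a_s>a_{s-1}>\cdots>a_i\geq i\geq 1$ gives $\binom{a_s}{s}+\sum_{t=i}^{s-1}\binom{a_t}{t}<\binom{a_s+1}{s}\leq\binom{b_s}{s}$, the last step because $a_s+1\leq b_s$; hence $n<S+\binom{b_s}{s}\leq m$. Exchanging the two representations turns $(\mathrm{L}1),(\mathrm{L}2)$ into $(\mathrm{G}1),(\mathrm{G}2)$ and the conclusion into $n>m$, completing the argument.

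I expect the only real obstacle to be the auxiliary inequality, and specifically its degenerate ranges: the tail sum $\sum_{t=i}^{s-1}$ is empty when $s=i$ and the top block $S$ is empty when $s=k+1$, and one must check throughout that every binomial coefficient appearing has a nonnegative integer top entry that is at least its bottom entry, so that both the monotonicity step and the padding step are legitimate; this is exactly where the bounds $d_t\geq t$ and $d_p\geq p$ do the work. Once that is pinned down, everything else is routine bookkeeping on the five conditions.
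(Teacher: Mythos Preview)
Your proposal is correct and follows essentially the same approach as the paper: the paper also derives part~$(a)$ from uniqueness and part~$(b)$ from the same auxiliary inequality (stated there as $\binom{c_p}{p}>\binom{d_p}{p}+\cdots+\binom{d_l}{l}$ whenever $c_p>d_p$), proved via the telescoping Pascal identity $\binom{c+1}{d}=\binom{c}{d}+\binom{c-1}{d-1}+\cdots+\binom{c-d}{0}$, and then runs the identical case split on whether the top coefficients agree. Your five-way partition with the trichotomy is just a cleaner packaging of the paper's case analysis, not a different argument.
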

	
	\begin{proof}
		Note that $(a)$ follows from Lemma \ref{lem::2.1} immediately. Thus it suffices to consider $(b)$.   Using Pascal's identity $\binom{p}{q} = \binom{p-1}{q} + \binom{p-1}{q-1}$ recursively, we obtain
		\[
		\binom{c+1}{d} = \binom{c}{d} + \binom{c-1}{d-1} + \cdots + \binom{c-d}{0}
		\]
		for any integers $0 \leq d \leq c+1$. This implies that
		\begin{equation}\label{equ::com}
			\binom{c_p}{p} > \binom{d_p}{p} + \binom{d_{p-1}}{p-1} + \cdots + \binom{d_l}{l}
		\end{equation}
		whenever $c_p > d_p$ and $d_p > d_{p-1} > \cdots > d_l \geq l \geq 1$. Let $s=\max\{i,j\}$. If $a_t = b_t$ for all $t\in \{s,\ldots,k+1\}$, then $n<m$ when $i>j$, $n>m$ when $i<j$, and $n=m$ when $i=j$. Now suppose that  there exists some index $t\in\{s,\ldots,k+1\}$ such that $a_t \neq b_t$. Let $t_0=\max\{t\in \{s,\ldots,k+1\}: a_t\neq b_t\}$. Then it follows  from \eqref{equ::com} that $n<m$ when $a_{t_0}<b_{t_0}$, and $n>m$ when  $a_{t_0}>b_{t_0}$.  This proves $(b)$.
	\end{proof}
	
	For any integer  $n,k\geq 1$, using the unique expansion \eqref{equ::num}, we define: 
	\[d^{k-1}(n)=\binom{a_{k}+1}{k}+\binom{a_{k-1}+1}{k-1}+\cdots+\binom{a_i+1}{i}.\]
	For completeness, define $d^{k-1}(0)=0$.

	\begin{lemma}\label{lem::2.3}
		Let $n$ be a nonnegative integer. Then $n+\partial_k(n)=d^k(n)$.
	\end{lemma}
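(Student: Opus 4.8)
The plan is to deduce the identity from a single termwise application of Pascal's identity. First I would dispose of the trivial case $n=0$: here $\partial_k(0)=0$ and $d^k(0)=0$ by the stated conventions, so the claimed equality $0+0=0$ is immediate.

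Now assume $n\geq 1$ and write its $k$-dimensional representation, guaranteed by Lemma \ref{lem::2.1}, as
\[
n=\binom{a_{k+1}}{k+1}+\binom{a_k}{k}+\cdots+\binom{a_i}{i},\qquad a_{k+1}>a_k>\cdots>a_i\geq i\geq 1.
\]
Unwinding the definitions recalled just above the lemma, this gives
\[
\partial_k(n)=\binom{a_{k+1}}{k}+\binom{a_k}{k-1}+\cdots+\binom{a_i}{i-1},\qquad
d^k(n)=\binom{a_{k+1}+1}{k+1}+\binom{a_k+1}{k}+\cdots+\binom{a_i+1}{i}.
\]
I would then rewrite each summand of $d^k(n)$ via $\binom{a_t+1}{t}=\binom{a_t}{t}+\binom{a_t}{t-1}$, which is valid for every $t$ in the range (including $t=i$, even when $i=1$, where it reads $\binom{a_1+1}{1}=\binom{a_1}{1}+\binom{a_1}{0}$), and regroup the resulting binomial coefficients: the terms $\binom{a_t}{t}$ sum to $n$ and the terms $\binom{a_t}{t-1}$ sum to $\partial_k(n)$, which yields $d^k(n)=n+\partial_k(n)$.

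I do not anticipate any genuine difficulty here: the argument is purely formal, and neither the strict decrease of the $a_t$ nor the bound $a_i\geq i$ enters the algebra — these hypotheses serve only to ensure, through Lemma \ref{lem::2.1}, that the expansion exists and is unique, so that $\partial_k(n)$ and $d^k(n)$ are unambiguously defined. The one spot warranting a moment's attention is the lowest-order term when $i=1$, and the computation above already checks it.
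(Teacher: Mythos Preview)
Your proposal is correct and essentially identical to the paper's own proof: both dispose of $n=0$ by the conventions and, for $n\geq 1$, apply Pascal's identity $\binom{a_t}{t}+\binom{a_t}{t-1}=\binom{a_t+1}{t}$ termwise to the $k$-dimensional representation. The only cosmetic difference is the direction of the computation --- the paper adds $n$ and $\partial_k(n)$ and collapses each pair into $\binom{a_t+1}{t}$, whereas you expand each summand of $d^k(n)$ and regroup --- which is immaterial.
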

	
	\begin{proof}
		The case $n = 0$ is trivial, so we may assume $n \geq 1$. By Lemma \ref{lem::2.1}, we suppose that the $k$-dimensional representation of $n$ is given by
		\[
		n = \binom{a_{k+1}}{k+1} + \binom{a_k}{k} + \cdots + \binom{a_i}{i},
		\]
		where $a_{k+1} > a_k > \cdots > a_i \geq i \geq 1$. Thus we have
		\begin{equation*}
			\begin{aligned}
				n+\partial_k(n)&=\binom{a_{k+1}}{k+1}+\binom{a_k}{k}+\cdots+\binom{a_i}{i}+\binom{a_{k+1}}{k}+\binom{a_k}{k-1}+\cdots+\binom{a_i}{i-1}\\
				&=\binom{a_{k+1}+1}{k+1}+\binom{a_k+1}{k}+\cdots+\binom{a_i+1}{i}\\
				&=d^k(n),
			\end{aligned}
		\end{equation*}
		as desired.
	\end{proof}

	\begin{lemma}\label{lem::2.4}
		Let $n$ be a nonnegative integer. Then $\delta_k(n)=n-\partial^k(n)-\partial_k(\partial^k(n))$.
	\end{lemma}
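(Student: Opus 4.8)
The plan is to compute all three quantities directly from the $k$-dimensional representation of $n$ and then collapse the result with Pascal's identity. If $n=0$ everything vanishes, so assume $n\geq 1$ and write $n=\binom{a_{k+1}}{k+1}+\binom{a_k}{k}+\cdots+\binom{a_i}{i}$ with $a_{k+1}>a_k>\cdots>a_i\geq i\geq 1$ as in Lemma \ref{lem::2.1}. The first point to establish is a structural one: the indices $t$ appearing in this expansion with $a_t=t$ form an \emph{initial} block $\{i,i+1,\ldots,i+\ell-1\}$, where $\ell=\delta_k(n)$. Indeed, if $a_{t_0}=t_0$, then from $a_i<a_{i+1}<\cdots<a_{t_0}=t_0$ together with $a_s\geq s$ one is forced to have $a_s=s$ for every $s$ with $i\leq s\leq t_0$; consequently $a_t\geq t+1$ for all $t\geq i+\ell$.

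Since $\binom{t-1}{t}=0$, the bottom block contributes nothing to $\partial^k(n)$, so $\partial^k(n)=\sum_{t=i+\ell}^{k+1}\binom{a_t-1}{t}$. The key step is to check that this truncated sum is itself a legitimate $k$-dimensional representation: its entries $a_{k+1}-1>\cdots>a_{i+\ell}-1$ are still strictly decreasing, and $a_{i+\ell}-1\geq i+\ell\geq 1$ because $a_{i+\ell}\geq i+\ell+1$; when the bottom block exhausts all indices this is the empty sum, i.e.\ $\partial^k(n)=0$, and the bookkeeping below still goes through with $\partial_k(0)=0$. Granting this, uniqueness of the representation (Lemma \ref{lem::2.1}, or Lemma \ref{lem::2.2}) lets one read off $\partial_k$ term by term: $\partial_k(\partial^k(n))=\sum_{t=i+\ell}^{k+1}\binom{a_t-1}{t-1}$.

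Finally, Pascal's identity $\binom{a_t-1}{t}+\binom{a_t-1}{t-1}=\binom{a_t}{t}$ gives $\partial^k(n)+\partial_k(\partial^k(n))=\sum_{t=i+\ell}^{k+1}\binom{a_t}{t}$, and therefore
\[
n-\partial^k(n)-\partial_k(\partial^k(n))=\sum_{t=i}^{i+\ell-1}\binom{a_t}{t}=\sum_{t=i}^{i+\ell-1}\binom{t}{t}=\ell=\delta_k(n),
\]
which is the claim. The only genuine obstacle is the middle paragraph: one must verify that after deleting the vanishing terms the remaining expression really satisfies the hypotheses of Lemma \ref{lem::2.1}, so that $\partial_k$ acts on $\partial^k(n)$ exactly term by term; everything else is Pascal's identity and routine index bookkeeping.
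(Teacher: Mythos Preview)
Your proof is correct and follows essentially the same route as the paper: both identify the initial block of indices with $a_t=t$, observe that $\partial^k(n)$ equals the truncated sum $\sum_{t\geq i+\ell}\binom{a_t-1}{t}$ in valid $k$-dimensional form, and then collapse $\partial^k(n)+\partial_k(\partial^k(n))$ back to $\sum_{t\geq i+\ell}\binom{a_t}{t}$. The only cosmetic difference is that the paper packages the last step through Lemma~\ref{lem::2.3} and the auxiliary operator $d^k$ (so that $\partial^k(n)+\partial_k(\partial^k(n))=d^k(\partial^k(n))$), whereas you apply Pascal's identity term by term directly; your version is in fact slightly more explicit about why the truncated sum is a legitimate representation, a point the paper leaves implicit.
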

	
	\begin{proof}
		The case $n = 0$ is trivial, so we may assume $n \geq 1$. By Lemma \ref{lem::2.1},  we suppose that the $k$-dimensional representation of $n$ is given by
		\[
		n=\binom{a_{k+1}}{k+1}+\binom{a_k}{k}+\cdots+\binom{a_i}{i},
		\] 
		where $a_{k+1}>a_k>\cdots>a_i\geq i\geq 1$. If $a_t=t$ for all $t\in\{i,\ldots, k+1\}$, then $\delta_k(n)=n=k+2-i$, $\partial^k(n)=\partial_k(\partial^k(n))=0$, and we are done. Now suppose that there exists some $t\in\{i,\ldots, k+1\}$ such that $a_t\neq t$. Let $t_0=\min\{t\in\{i,\ldots, k+1\}: a_t\neq t\}$. Then  $a_{t}=t$ when $t<t_0$, and $a_t>t$ when $t\geq t_0$. According to the definition of the operator $\delta_k$, we obtain
		\[
		\delta_k(n)=t_0-i=\binom{a_{t_0-1}}{t_0-1}+\cdots+\binom{a_i}{i}.
		\]
		Therefore, by Lemma \ref{lem::2.3}, 
		\[
		\begin{aligned}
			\partial^k(n)+\partial_k(\partial^k(n))&=d^k(\partial^k(n))\\
			&=d^k\left(\binom{a_{k+1}-1}{k+1}+\binom{a_k-1}{k}+\cdots+\binom{a_{i_0}-1}{i_0}\right)\\
			&=\binom{a_{k+1}}{k+1}+\binom{a_k}{k}+\cdots+\binom{a_{i_0}}{i_0}\\
			&=n-\delta(n),
		\end{aligned}
		\]
		as desired.
	\end{proof}
	\begin{lemma}\label{lem::2.5}
		Let $n$ and $m$ be nonnegative integers. 
		\begin{enumerate}[$(a)$]\setlength{\itemsep}{0pt}
			\item If $d^k(n)\leq m$, then $n\leq \partial^k(m)$. In particular, if $d^k(n)=m$, then $n= \partial^k(m)$.
			\item If $n\leq m$, then $\partial_k(n)\leq \partial_k(m)$. 
			\item If $d^k(n)=d^k(m)$, then $n=m$.
		\end{enumerate}
	\end{lemma}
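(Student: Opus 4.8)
The plan is to derive part $(c)$ from part $(b)$, to prove $(b)$ by a Kruskal--Katona-style monotonicity argument, and to prove $(a)$ by first identifying the $k$-dimensional representation of $d^k(n)$ and then feeding the hypothesis $d^k(n)\le m$ into Lemma \ref{lem::2.2}.

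I would start with $(c)$, since it is the quickest. Granting $(b)$: if $n\neq m$, say $n<m$, then $\partial_k(n)\le\partial_k(m)$ by $(b)$, so Lemma \ref{lem::2.3} gives $d^k(n)=n+\partial_k(n)<m+\partial_k(m)=d^k(m)$, contradicting $d^k(n)=d^k(m)$; hence $n=m$. Thus $(c)$ costs nothing once $(b)$ is in hand.

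Next I would prove $(b)$. The cases $n=0$ and $n=m$ are immediate, so assume $1\le n<m$ and write the $k$-dimensional representations $n=\sum_{t=i}^{k+1}\binom{a_t}{t}$ and $m=\sum_{t=j}^{k+1}\binom{b_t}{t}$ as in Lemma \ref{lem::2.1}, so that $\partial_k(n)=\sum_{t=i}^{k+1}\binom{a_t}{t-1}$ and $\partial_k(m)=\sum_{t=j}^{k+1}\binom{b_t}{t-1}$. Applying Lemma \ref{lem::2.2}$(b)$ splits the argument. If $i>j$ and $a_t=b_t$ for all $t\in\{i,\ldots,k+1\}$, then $\partial_k(m)-\partial_k(n)=\sum_{t=j}^{i-1}\binom{b_t}{t-1}\ge0$. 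Otherwise, let $t_0$ be the largest index with $a_{t_0}\neq b_{t_0}$; then $a_{t_0}<b_{t_0}$, the contributions with $t>t_0$ agree, and it remains to show $\sum_{i\le t\le t_0}\binom{a_t}{t-1}\le\binom{b_{t_0}}{t_0-1}$. Using $a_{t_0-s}\le a_{t_0}-s$ to compare termwise, and then the identity $\binom{c+1}{d}=\binom{c}{d}+\binom{c-1}{d-1}+\cdots+\binom{c-d}{0}$ already established inside the proof of Lemma \ref{lem::2.2}, the left-hand side is at most $\sum_{s=0}^{t_0-1}\binom{a_{t_0}-s}{t_0-1-s}=\binom{a_{t_0}+1}{t_0-1}\le\binom{b_{t_0}}{t_0-1}$, the last step using $b_{t_0}\ge a_{t_0}+1$. (This is the classical monotonicity of the shadow function invoked in \cite{BK88}, so one could also simply cite it.)

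Finally, for $(a)$, assume $n\ge1$ (the case $n=0$ being trivial) and write $n=\sum_{t=i}^{k+1}\binom{a_t}{t}$. The observation that drives everything is that, since $a_{k+1}+1>\cdots>a_i+1\ge i\ge1$, the expansion $d^k(n)=\sum_{t=i}^{k+1}\binom{a_t+1}{t}$ is, by the uniqueness in Lemma \ref{lem::2.1}, precisely the $k$-dimensional representation of $d^k(n)$; consequently $\partial^k(d^k(n))=\sum_{t=i}^{k+1}\binom{a_t}{t}=n$. This already gives the ``in particular'' clause: if $d^k(n)=m$, then Lemma \ref{lem::2.2}$(a)$ forces $m$ to have this same representation, so $\partial^k(m)=n$. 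For the general inequality, write $m=\sum_{t=j}^{k+1}\binom{b_t}{t}$ and apply Lemma \ref{lem::2.2} to $d^k(n)\le m$: either $d^k(n)=m$, already handled; or $i>j$ with $b_t=a_t+1$ for $t\ge i$, giving $\partial^k(m)=n+\sum_{t=j}^{i-1}\binom{b_t-1}{t}\ge n$; or the largest index $t_0$ with $a_{t_0}+1\neq b_{t_0}$ satisfies $a_{t_0}+1<b_{t_0}$, in which case, after the contributions with $t>t_0$ cancel, $\partial^k(m)\ge\sum_{t_0<t\le k+1}\binom{a_t}{t}+\binom{b_{t_0}-1}{t_0}$, and since $b_{t_0}-1>a_{t_0}>\cdots>a_i\ge i\ge1$ the inequality \eqref{equ::com} yields $\binom{b_{t_0}-1}{t_0}>\sum_{i\le t\le t_0}\binom{a_t}{t}$, whence $\partial^k(m)>n$. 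In every case $n\le\partial^k(m)$.

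The step I expect to demand the most attention is the bookkeeping with the boundary terms of the representations: checking that the displayed expansions of $d^k(n)$ really do satisfy the constraints $a_{k+1}+1>\cdots>a_i+1\ge i\ge1$ required of a $k$-dimensional representation (so that Lemma \ref{lem::2.1} applies), that the critical index $t_0$ always lies in the range where all the relevant binomials actually occur, and that the smallest entries never invalidate the hypotheses of \eqref{equ::com}. In part $(a)$ this is clean because every binomial in the representation of $n$ has the form $\binom{a_t}{t}$ with $a_t\ge t\ge1$, so \eqref{equ::com} applies with nothing to check; in part $(b)$ the Pascal-type telescoping is used precisely to circumvent the analogous difficulty at the bottom term.
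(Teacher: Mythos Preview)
Your proof is correct and, for parts $(a)$ and $(b)$, follows essentially the same case analysis via Lemma~\ref{lem::2.2} as the paper. The only difference is in $(c)$: you deduce it from $(b)$ together with the identity $d^k=\mathrm{id}+\partial_k$ of Lemma~\ref{lem::2.3}, whereas the paper reads it off directly from the uniqueness in Lemma~\ref{lem::2.2}$(a)$ applied to the $k$-dimensional representations of $d^k(n)$ and $d^k(m)$.
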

	\begin{proof}
		If $n=0$ or $m=0$, the result is trivial. Thus we may assume  that $n,m\geq1$. By Lemma \ref{lem::2.1}, we suppose that the $k$-dimensional representations of $n$ and $m$ are  given by
		\[
		n=\binom{a_{k+1}}{k+1}+\binom{a_k}{k}+\cdots+\binom{a_i}{i}\]
		and 
		\[
		m=\binom{b_{k+1}}{k+1}+\binom{b_k}{k}+\cdots+\binom{b_j}{j},
		\]
		where $a_{k+1}>a_k>\cdots> a_i\geq i\geq 1$ and  $b_{k+1}>b_k>\cdots> b_j\geq j\geq 1$, respectively. Then 
		\[
		d^k(n)=\binom{a_{k+1}+1}{k+1}+\binom{a_k+1}{k}+\cdots+\binom{a_i+1}{i}
		\]
		and 
		\[
		d^k(m)=\binom{b_{k+1}+1}{k+1}+\binom{b_k+1}{k}+\cdots+\binom{b_j+1}{j}
		\]
		are  the $k$-dimensional representations of $d^k(n)$ and $d^k(m)$, respectively. 
		
		First consider $(a)$. If $d^k(n)=m$, by Lemma \ref{lem::2.2}, we obtain $i=j$ and $a_t+1=b_t$ for all $t\in\{i,\ldots,k+1\}$. Therefore,  $n=\partial^k(m)$. If $d^k(n)< m$, again by Lemma \ref{lem::2.2}, either $i>j$ and $a_t+1=b_t$ for all $t\in\{i,\ldots,k+1\}$, or the maximal index $t\in\{\max\{i,j\},\ldots,k+1\}$ with $a_t+1\neq b_t$ satisfies $a_t+1<b_t$. For the former case, we have 
		\begin{equation*}
			\begin{aligned}
				n&=\binom{a_{k+1}}{k+1}+\binom{a_k}{k}+\cdots+\binom{a_i}{i}\\
				&= \binom{b_{k+1}-1}{k+1}+\binom{b_k-1}{k}+\cdots+\binom{b_i-1}{i}\\
				&\leq \binom{b_{k+1}-1}{k+1}+\binom{b_k-1}{k}+\cdots+\binom{b_i-1}{i}+\cdots+\binom{b_j-1}{j}\\
				&= \partial^k(m),
			\end{aligned}
		\end{equation*}
		as desired. For the latter case, let $t_0=\max\{t\in \{\max\{i,j\},\ldots,k+1\}: a_t+1\neq b_t\}$. Then $a_{t_0}+1<b_{t_0}$, and we have
		\[
		\begin{aligned}
			n&=\binom{a_{k+1}}{k+1}+\binom{a_k}{k}+\cdots+\binom{a_i}{i}\\
			&<\binom{a_{k+1}}{k+1}+\binom{a_k}{k}+\cdots+\binom{a_{t_0+1}}{t_0+1}+\binom{b_{t_0}-1}{t_0}\\
			&=\binom{b_{k+1}-1}{k+1}+\binom{b_k-1}{k}+\cdots+\binom{b_{t_0+1}-1}{t_0+1}+\binom{b_{t_0}-1}{t_0}\\
			&\leq \binom{b_{k+1}-1}{k+1}+\binom{b_k-1}{k}+\cdots+\binom{b_j-1}{j}\\
			&= \partial^k(m).
		\end{aligned}
		\]
		This proves $(a)$.
		
		Now consider $(b)$. If $n=m$, then $\partial_k(n)=\partial_k(m)$ by definition. Thus we can assume that $n<m$. Again by Lemma \ref{lem::2.2}, we have that either $i>j$ and $a_t=b_t$ for all $t\in\{i,\ldots,k+1\}$, or the maximal index $t\in\{\max\{i,j\},\ldots,k+1\}$ with $a_t\neq b_t$ satisfies $a_t<b_t$. For the former case, we have
		\[
		\begin{aligned}
			\partial_k(n)&=\binom{a_{k+1}}{k}+\binom{a_k}{k-1}+\cdots+\binom{a_i}{i-1}\\
			&= \binom{b_{k+1}}{k}+\binom{b_k}{k-1}+\cdots+\binom{b_i}{i-1}\\
			&\leq  \binom{b_{k+1}}{k}+\binom{b_k}{k-1}+\cdots+\binom{b_i}{i-1}+\cdots +\binom{b_j}{j-1}\\
			&=\partial_k(m).
		\end{aligned}
		\]
		For the latter case, let $t_0=\max\{t\in \{\max\{i,j\},\ldots,k+1\}: a_t\neq b_t\}$. Then $a_{t_0}<b_{t_0}$, and we obtain
		\begin{equation*}
			\begin{aligned}
				\partial_k(n)-\partial_k(m)&=\binom{a_{t_0}}{t_0-1}+\cdots+\binom{a_{i}}{i-1}-\binom{b_{t_0}}{t_0-1}-\cdots-\binom{b_{j}}{j-1}\\
				&\leq\binom{a_{t_0}}{t_0-1}+\cdots+\binom{a_{i}}{i-1}-\binom{b_{t_0}}{t_0-1}\\
				&\leq\binom{a_{t_0}}{t_0-1}+\cdots+\binom{a_{i}}{i-1}-\binom{a_{t_0}+1}{t_0-1}\\
				&\leq\binom{a_{t_0}}{t_0-1}+\binom{a_{t_0}-1}{t_0-2}+\cdots+\binom{a_{t_0}-t_0+1}{0}-\binom{a_{t_0}+1}{t_0-1}\\
				&=\binom{a_{t_0}+1}{t_0-1}-\binom{a_{t_0}+1}{t_0-1}\\
				&=0,
			\end{aligned}
		\end{equation*}
		where the penultimate equality follows from Pascal's identity. Thus $(b)$ follows.
		
		Next consider $(c)$. If $d^k(n)=d^k(m)$, by Lemma \ref{lem::2.2}, we assert that $i=j$ and $a_t+1=b_t+1$ for all $t\in\{i,\ldots,k+1\}$. Therefore, $n=m$, and $(c)$ follows.
	\end{proof}

	We are now ready to give the proof of Theorem \ref{thm::main1}.
	
	\renewcommand\proofname{\bf{Proof of Theorem \ref{thm::main1}}}
	\begin{proof}
		If $\partial_k(n)\leq m-\epsilon_k$, by Lemma \ref{lem::2.3}, 
		\[
		d^k(n)
		=n+\partial_k(n)
		\leq n+m-\epsilon_k.
		\]
		Since $0\leq \epsilon_k\leq \delta_k(f_k)$, by Lemma \ref{lem::2.5} $(a)$, we deduce that 
		\[
		n\leq \partial^k(n+m-\epsilon_k)=\partial^k(n+m).
		\]
		Conversely, if $ n\leq \partial^k(n+m)$, by Lemma \ref{lem::2.5} $(b)$, we have 
		\[
		\partial_k(n)\leq\partial_k(\partial^k(n+m)),
		\] 
		and hence
		\begin{equation*}
			n+\partial_k(n)\leq \partial^k(n+m)+\partial_k(\partial^k(n+m)).
		\end{equation*}
		Combining this with Lemma \ref{lem::2.4} yields that
		\begin{equation*}
			\begin{aligned}
				\partial_k(n)&\leq m-(n+m-\partial^k(n+m)-\partial_k(\partial^k(n+m)))=m-\delta_k(n+m)\leq m-\epsilon_k.
			\end{aligned}
		\end{equation*}
		This proves  $(a)$. 
		
		If $n=\partial^k(n+m)$, then $\partial_k(n)=\partial_k(\partial^k(n+m))$. This implies 
		\begin{equation}\label{eq::proof}
			n+\partial_k(n)= \partial^k(n+m)+\partial_k(\partial^k(n+m)),
		\end{equation}
		or equivalently, 
		\[
		\partial_k(n)+\delta_k(n+m)=m
		\]
		by Lemma \ref{lem::2.4}. Conversely, if $\partial_k(n)+\delta_k(n+m)=m$, then we recover \eqref{eq::proof}. By Lemma \ref{lem::2.3}, it follows that
		\[
		d^k(n)=d^k(\partial^k(n+m)).
		\]
		Combining this with  Lemma \ref{lem::2.5} $(c)$, we obtain 
		\[n=\partial^k(n+m).\] 
		Thus $(b)$ follows. 
		
		Suppose that $n=\partial^k(n+m)$. According to  $(b)$, we have $\partial_k(n)+\delta_k(n+m)=m$. If $\delta_k(n+m)=0$ then $\partial_k(n)=m$, and if $\delta_k(n+m)\neq 0$, then $\partial_k(n)\neq m$. On the other hand, if $\partial_k(n)=m$, by Lemma \ref{lem::2.3},
		\[
		d^k(n)=\partial_k(n)+n=m+n.
		\]
		Combining this with Lemma \ref{lem::2.5} $(a)$, we obtain $ n=\partial^k(n+m)$,  and so $(c)$ follows.
		
		This completes the proof.	
	\end{proof}

	 In order to prove Theorem \ref{thm::main2}, we also need the Kruskal-Katona theorem \cite{Kat66,Kru63}, which gives a complete characterization of $f$-vectors of simplicial complexes.
	
	\begin{lemma}[{Kruskal \cite{Kru63}, Katona \cite{Kat66}}] \label{lem::2.6}
		A sequence $f=(f_0,f_1,\ldots)\in \mathbb{N}_0^{(\infty)}$ is the $f$-vector of some simplicial complex if and only if 
		\[
		\partial_k(f_k)\leq f_{k-1},\quad \text{for every}~k\geq 1.
		\] 
	\end{lemma}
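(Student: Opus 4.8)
The plan is to prove the two implications separately, treating $\partial_k$ as a shadow operator on colexicographic initial segments. The ``if'' direction will be an explicit construction resting only on an elementary binomial computation, while the ``only if'' direction will invoke the Kruskal--Katona shadow inequality.

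For the ``only if'' direction I would suppose that $\Delta$ is a simplicial complex with $f$-vector $f$ and identify each $i$-dimensional face with the $(i+1)$-element subset of the vertex set it spans, so that the $k$-faces form a family $\mathcal{F}_k$ of $(k+1)$-sets with $|\mathcal{F}_k|=f_k$. Writing $\partial\mathcal{F}_k$ for the family of all $k$-element subsets of members of $\mathcal{F}_k$, downward closure of $\Delta$ gives $\partial\mathcal{F}_k\subseteq\mathcal{F}_{k-1}$, hence $|\partial\mathcal{F}_k|\leq f_{k-1}$. The Kruskal--Katona shadow bound says that a family of $m$ many $(k+1)$-sets has shadow of size at least $\partial_k(m)$, where $\partial_k(m)$ is read off from the $k$-dimensional representation of $m$ exactly as in \eqref{equ::num}; taking $m=f_k$ then yields $\partial_k(f_k)\leq|\partial\mathcal{F}_k|\leq f_{k-1}$ for every $k\geq 1$.

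For the ``if'' direction I would assume $\partial_k(f_k)\leq f_{k-1}$ for all $k\geq 1$ and construct a complex directly. For each $k\geq 0$, let $\mathcal{C}_k$ be the set of the first $f_k$ many $(k+1)$-element subsets of $\{1,2,3,\dots\}$ in colexicographic order; since $f$ is ultimately vanishing, each $\mathcal{C}_k$ is finite and all but finitely many are empty. The one fact I would establish is that the shadow of such an initial segment is again an initial segment of the same type: $\partial\mathcal{C}_k$ consists precisely of the first $\partial_k(f_k)$ many $k$-element subsets in colex order. This comes from the block decomposition of colex initial segments governed by \eqref{equ::num} --- if $f_k=\binom{a_{k+1}}{k+1}+\cdots+\binom{a_i}{i}$, then $\mathcal{C}_k$ splits into the $\binom{a_{k+1}}{k+1}$ subsets of $\{1,\dots,a_{k+1}\}$, then the $\binom{a_k}{k}$ sets $T\cup\{a_{k+1}+1\}$ with $T\subseteq\{1,\dots,a_k\}$, and so on, with shadow splitting correspondingly into a total of $\binom{a_{k+1}}{k}+\cdots+\binom{a_i}{i-1}=\partial_k(f_k)$ sets --- and is carried out with Pascal's identity, in the same spirit as the computations in Lemmas \ref{lem::2.2}--\ref{lem::2.5}. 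Because colex initial segments of different sizes are nested and $\partial_k(f_k)\leq f_{k-1}$, this gives $\partial\mathcal{C}_k\subseteq\mathcal{C}_{k-1}$; hence $\Delta:=\{\emptyset\}\cup\bigcup_{k\geq 0}\mathcal{C}_k$ is downward closed, i.e.\ a simplicial complex, and by construction $f_i(\Delta)=f_i$ for all $i$.

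The hard part will be the Kruskal--Katona shadow inequality $|\partial\mathcal{F}|\geq\partial_k(|\mathcal{F}|)$ used in the ``only if'' direction. The exact shadow identity for colex segments (the elementary step above) reduces it to the assertion that, among all families of a given size, the colex initial segment minimizes the shadow; the standard way to get this is a compression (shifting) argument showing that a colex-compression operation never enlarges the shadow while pushing an arbitrary family toward the colex initial segment. Since the lemma is quoted from \cite{Kru63,Kat66}, one could also simply cite it; a self-contained proof would carry out the compression step, which is where the real work lies, with the ``if'' direction needing only bookkeeping with binomial coefficients.
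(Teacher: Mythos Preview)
The paper does not prove this lemma at all: it is stated with attribution to Kruskal and Katona and then used as a black box, so there is no ``paper's own proof'' to compare against. Your outline is the standard argument and is correct as a sketch --- the colex initial-segment construction handles the ``if'' direction, and the ``only if'' direction is exactly the shadow lower bound, which you rightly flag as the substantive part requiring either citation or a compression argument. Since the paper itself is content to cite \cite{Kru63,Kat66}, your plan already goes further than what the paper does.
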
	
	
	In what follows, we present the proof of Theorem \ref{thm::main2}.

	\renewcommand\proofname{\bf{Proof of Theorem \ref{thm::main2}}}
	\begin{proof} First observe that $(d^*)$ follows from Theorem \ref{thm::2} and Theorem \ref{thm::main1} immediately. Now suppose  that $f$ is the $f$-vector of some simplicial complex. Let $\beta\in B_f$. Since $f\sim \beta$,  by Corollary \ref{cor::1}, we have $\chi_{-1}=1$, and $f_k-\chi_{k-1}\leq \partial^k(f_k)$ for all $k\geq 1$. Also note that $f_0-\chi_{-1}=f_0-1=\partial^0(f_0)$. Hence, for each $k\geq 0$, 
		\begin{equation}\label{eq::beta_Upper}
			\begin{aligned}
				\beta_k&=f_k-\chi_{k}-\chi_{k-1}\\
				&=(f_k-\chi_{k-1})+(f_{k+1}-\chi_{k})-f_{k+1}\\
				&\leq\partial^k(f_k)+\partial^{k+1}(f_{k+1})-f_{k+1}.
			\end{aligned}
		\end{equation} 
		Furthermore, according to \cite[Theorem 5.3]{BK88}, the upper bound in  \eqref{eq::beta_Upper} can be achieved uniformly, that is,
		\[
		\psi(f)=\max B_f=(\psi_0,\psi_1,\ldots) ~\text{with}~ \psi_i=\partial^i(f_i)+\partial^{i+1}(f_{i+1})-f_{i+1}~\text{for}~i\geq 0.
		\]
		In order to prove $(c^*)$, by Theorem \ref{thm::2} and Theorem \ref{thm::main1}, it suffices to prove that  
		\[
		\beta=\psi(f)\Longleftrightarrow \chi_{-1}=1~\text{and}~ f_k-\chi_{k-1}=\partial^k(f_k)~\text{for all}~k\geq 1.
		\]
		If $\beta=\psi(f)$, then $f\sim \beta$, and hence $\chi_{-1}=1$. Moreover, from \eqref{eq::beta_Upper} we obtain $f_k-\chi_{k-1}=\partial^k(f_k)$ for all $k\geq 1$. Conversely, if $\chi_{-1}=1$ and $f_k-\chi_{k-1}=\partial^k(f_k)$ for all $k\geq 1$, then 
		\[
		\beta_k=f_k-\chi_{k}-\chi_{k-1}=f_k-\chi_{k-1}+f_{k+1}-\chi_{k}-f_{k+1}=\partial^k(f_k)+\partial^{k+1}(f_{k+1})-f_{k+1}
		\]
		for all $k\geq 0$. Therefore, $\beta=\psi(f)$. This proves $(c^*)$.
		
		Next we consider $(f^*)$. According to $(c^*)$ and $(d^*)$, if $f$ is an $f$-vector  with $\delta(f)=0$, then $\phi(\psi(f))=f$. Now suppose that $f$ is an arbitrary $f$-vector. By Lemma \ref{lem::2.6}, it is easy to see that $f-\delta(f)$ is also an $f$-vector. Moreover, by definition,  we have $\delta(f-\delta(f))=0$. Thus $\phi(\psi(f-\delta(f)))=f-\delta(f)$ by the above arguments. Note that, for any $k\geq 0$, 
		\begin{equation*}
			\begin{aligned}
				\psi_k(f_k-\delta_k(f_k))&=\partial^k(f_k-\delta_k(f_k))+\partial^{k+1}(f_{k+1}-\delta_{k+1}(f_{k+1}))-f_{k+1}+\delta_{k+1}(f_{k+1})\\
				&=\partial^k(f_k)+\partial^{k+1}(f_{k+1})-f_{k+1}+\delta_{k+1}(f_{k+1})\\
				&=\psi_k(f_k)+\delta_{k+1}(f_{k+1}).
			\end{aligned}
		\end{equation*}
		This implies $\psi(f-\delta(f))=\psi(f)+\delta_+(f)$, and consequently, 
		\[
		\phi(\psi(f)+\delta_+(f))=\phi(\psi(f-\delta(f)))=f-\delta(f).
		\]
		By \cite[Theorem 5.2]{BK88}, the mapping $\phi$ is injective and order-preserving. Therefore, we conclude that
		\begin{equation*}
			\phi(\psi(f))+\delta_+(f)\leq\phi(\psi(f)+\delta_+(f))=f-\delta(f),
		\end{equation*}
		where equalities hold everywhere if and only if $\delta(f)=0$.
		
		This completes the proof.
	\end{proof}

	\section*{Declaration of interest statement}
	
	The authors declare that they have no known competing financial interests or personal relationships that could have appeared to influence the work reported in this paper.

	\section*{Acknowledgements}
	
	X. Huang was supported by the National Natural Science Foundation of China (No. 12471324) and the Natural Science Foundation of Shanghai (No. 24ZR1415500).
	
	\section*{Data availability}
	No data was used for the research described in the article.

\end{document}